\theoremstyle{plain}
\newtheorem{Thm}{Theorem}
\newtheorem{Lem}{Lemma}
\theoremstyle{definition}
\theoremstyle{remark}
\def\pmod #1{\ ({\rm{mod}}\ #1)}
\def\Z{\mathbb Z}
\def\N{\mathbb N}
\def\qbinom #1#2#3{{\genfrac{[}{]}{0pt}{}{#1}{#2}}_{#3}}
\def\pmod #1{\ ({\rm{mod}}\ #1)}
\def\1{{\bf{1}}}
\def\x{{\bf{x}}}
\def\y{{\bf{y}}}
\def\l{{\mathfrak{l}}}
\def\k{{\mathfrak{k}}}
\def\h{{\mathfrak{h}}}
\def\e{{\mathfrak{e}}}
\def\v{{\mathfrak{v}}}
\def\O{{\mathcal{O}}}
\def\cQ{{\mathcal{Q}}}
\def\cP{{\mathcal{P}}}
\begin{document}
\title{A Lucas-type congruence for $q$-Delannoy numbers}
\author{Hao Pan}
\email{haopan79@zoho.com}
\address{Department of Mathematics, Nanjing University,
Nanjing 210093, People's Republic of China}
\keywords{Delannoy number, 
Lucas' type congruence, group action}
\subjclass[2010]{Primary 05A30; Secondary 11A07, 05E18}
\maketitle

\section{Introduction}
\setcounter{Lem}{0}\setcounter{Thm}{0}\setcounter{Cor}{0}
\setcounter{equation}{0}

In combinatorics, the Delannoy number $D(h,k)$ denotes the number of lattices paths from $(0,0)$ to $(h,k)$, by only using three kinds of steps: east $(1,0)$, north $(0,1)$ and northeast $(1,1)$. 
The Delannoy numbers satisfy the recurrence relation
\begin{equation}\label{recurd}
D(h+1,k+1)=D(h+1,k)+D(h,k+1)+D(h,k).
\end{equation}
They also have two closed-form expressions:
\begin{align}
D(h,k)=&
\sum_{j=0}^h\binom{k}{j}\binom{h+k-j}{k}\\
=&\label{twod}\sum_{j=0}^h2^j\binom{k}{j}\binom{h}{j}.
\end{align}
And the generating function of $D(h,k)$ is
\begin{equation}
\frac{1}{1-x-y-xy}=\sum_{h,k\geq 0}D(h,k)x^hy^k.
\end{equation}

On the other hand, the Delannoy numbers also have some interesting arithmetic properties. 
The well-known Lucas congruence says that for any prime $p$,
\begin{equation}\label{lucas}
\binom{ap+b}{cp+d}\equiv\binom{a}{c}\binom{b}{d}\pmod{p},
\end{equation}
where $a,b,c,d\in\Z$ and $0\leq b,d\leq p-1$. In fact, we also have the following Lucas-type congruence for $D(h,k)$:
\begin{equation}\label{lucasd}
D(ap+b,cp+d)\equiv D(a,c)D(b,d)\pmod{p}.
\end{equation}
In fact, (\ref{lucasd}) is a  special case of Theorem 1 of \cite{Razpet02} (by substituting $a=b=c=1$).

The $q$-binomial coefficient is defined by 
$$
\qbinom{h}{k}q=\frac{[h]_q[h-1]_q\cdots[h-k+1]_q}{[k]_q[k-1]_q\cdot[1]_q},
$$
where the $q$-integer $[n]_q=(1-q^n)/(1-q)$.
In particular, define $\qbinom{h}{0}q=1$ and $\qbinom{h}{k}q=0$ provided $k<0$.
Let $\Phi_n(q)$ be the $n$-th cyclotomic polynomial. There is a $q$-analogue of the Lucas congruence (cf. \cite[Theorem 2.2]{Sagan92}):
\begin{equation}\label{qlucas}
\qbinom{an+b}{cn+d}q\equiv\binom{a}{c}\qbinom{b}{d}q\pmod{\Phi_n(q)},
\end{equation}
where $a,b,c,d\in\N$ with $0\leq b,d\leq n-1$, and the above congruence is considered over the polynomial ring $\Z[q]$.
Noting that $\Phi_n(q)=[n]_q$ when $n$ is prime, (\ref{lucas}) follows from (\ref{qlucas}) by substituting $q=1$.

It is natural to ask whether we also can give a $q$-analogue of (\ref{lucasd}). Define the $q$-Delannoy number
\begin{equation}
D_q(n,k)=\sum_{j=0}^nq^{\binom{j+1}2}\qbinom{k}{j}q\qbinom{n+k-j}{k}q.
\end{equation}
In particular, set $D_q(n,k)=0$ if either $h$ or $k$ is negative.
As we shall see later, $D_q(n,k)$ is a suitable $q$-analogue of the Delannoy numbers, since the $q$-analogues of (\ref{recurd}) and (\ref{twod}) also can be established for $D_q(n,k)$.
However, it seems that such a $q$-analogue is not described in any literature, though the $q$-analogues
of the Schr\"oder number, which is closely related to the Delannoy number, have been studied in several papers \cite{BSS93, LiuWang07, ITZ09}.

In this short paper, we shall prove
\begin{Thm}\label{qlucast1} Suppose that $n$ is odd. Then
\begin{equation}\label{qlucasdo1}
D_q(an+b,cn+d)\equiv D(a,c)D_q(b,d)\pmod{\Phi_n(q)},
\end{equation}
where $a,b,c,d\in\N$ and $0\leq b,d\leq n-1$.
And if $n$ is even, then
\begin{equation}\label{qlucasde1}
D_q(an+b,cn+d)\equiv D_q(b,d)\pmod{\Phi_n(q)}.
\end{equation}
\end{Thm}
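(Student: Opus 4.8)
The plan is to prove both assertions at once by reducing $D_q(an+b,cn+d)$ modulo $\Phi_n(q)$ in $\Z[q]$, the only external ingredients being the $q$-Lucas congruence (\ref{qlucas}) together with the facts that $q^n\equiv 1\pmod{\Phi_n(q)}$ and, when $n$ is even, $q^{n/2}\equiv-1\pmod{\Phi_n(q)}$ (the latter because $\Phi_n(q)$ divides $q^n-1=(q^{n/2}-1)(q^{n/2}+1)$ but not $q^{n/2}-1$). The first step is to split the summation index in the definition as $j=en+f$ with $e\ge 0$ and $0\le f\le n-1$; since a term with $j>an+b$ makes $\qbinom{(a+c)n+(b+d)-j}{cn+d}{q}$ a $q$-binomial with top below bottom, the index in fact runs over $0\le e\le a$, with the extra restriction $f\le b$ when $e=a$.

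Next I would push everything through (\ref{qlucas}). This gives $\qbinom{cn+d}{en+f}{q}\equiv\binom{c}{e}\qbinom{d}{f}{q}$, and, writing $b+d-f=\epsilon n+\rho$ with $\epsilon\in\{0,1\}$ and $0\le\rho\le n-1$, it gives $\qbinom{(a+c)n+(b+d)-j}{cn+d}{q}\equiv\binom{a+c-e+\epsilon}{c}\qbinom{\rho}{d}{q}$. For the prefactor, the identity $\binom{en+f+1}{2}-\binom{f+1}{2}=\tfrac{1}{2}\,en(en+2f+1)$ shows $\binom{en+f+1}{2}\equiv\binom{f+1}{2}\pmod n$ when $n$ is odd and $\binom{en+f+1}{2}\equiv\binom{f+1}{2}+\tfrac{n}{2}e\pmod n$ when $n$ is even; hence $q^{\binom{en+f+1}{2}}\equiv q^{\binom{f+1}{2}}$, respectively $q^{\binom{en+f+1}{2}}\equiv(-1)^e q^{\binom{f+1}{2}}$, modulo $\Phi_n(q)$.

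The key point is that the ``carry'' terms and the boundary terms disappear modulo $\Phi_n(q)$. Since $\qbinom{\rho}{d}{q}$ is a nonzero polynomial only when $\rho\ge d$, while $\epsilon=1$ would force $\rho=b+d-f-n<d$ (as $b\le n-1$ and $f\ge 0$), the surviving terms all have $\epsilon=0$, $\rho=b+d-f$, and then $\max(0,b+d-n+1)\le f\le\min(b,d)$; in particular $f\le b$, so the index $e$ then runs unrestricted over $0\le e\le a$. Running the same reduction on $D_q(b,d)=\sum_{f=0}^{b}q^{\binom{f+1}{2}}\qbinom{d}{f}{q}\qbinom{b+d-f}{d}{q}$ shows that, modulo $\Phi_n(q)$, it is supported on exactly the same range of $f$, with $\qbinom{b+d-f}{d}{q}$ there equal to $\qbinom{\rho}{d}{q}$. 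It thus remains only to evaluate $\sum_{e=0}^{a}\binom{c}{e}\binom{a+c-e}{c}$ when $n$ is odd — which is $D(a,c)$ by the Delannoy identity $D(h,k)=\sum_{j}\binom{k}{j}\binom{h+k-j}{k}$ — and $\sum_{e=0}^{a}(-1)^{e}\binom{c}{e}\binom{a+c-e}{c}$ when $n$ is even — which equals $1$, being, up to the factor $(-1)^c$, the $c$-th finite difference at $0$ of the degree-$c$ polynomial $x\mapsto\binom{a+c-x}{c}$. This yields (\ref{qlucasdo1}) and (\ref{qlucasde1}) respectively.

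I expect the genuinely fiddly part to be the bookkeeping of the last two paragraphs: one must check carefully that the boundary case $e=a$ (where $f\le b$ is imposed) and the carry $\epsilon=1$ really contribute nothing once one passes to $\Z[q]/(\Phi_n(q))$, and that the surviving summation ranges on the two sides of each congruence coincide. Everything else is a routine application of (\ref{qlucas}) together with $q^n\equiv1\pmod{\Phi_n(q)}$.
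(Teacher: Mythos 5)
Your proof is correct, but it takes a genuinely different route from the paper. You work directly from the defining sum, split the summation index as $j=en+f$, and reduce each $q$-binomial factor via the known $q$-Lucas congruence (\ref{qlucas}), checking that the carry terms ($\epsilon=1$) and the boundary restriction at $e=a$ contribute nothing because $\qbinom{\rho}{d}{q}=0$ for $\rho<d$ (and, implicitly, that terms with $f>d$ die from the factor $\qbinom{d}{f}{q}$, which also disposes of the case $b+d-f<0$ that your ``$\epsilon\in\{0,1\}$'' normalization would otherwise miss); the exponent analysis $q^{\binom{en+f+1}{2}}\equiv(\pm1)^{e}q^{\binom{f+1}{2}}$ and the two binomial evaluations $\sum_{e}\binom{c}{e}\binom{a+c-e}{c}=D(a,c)$ and $\sum_{e}(-1)^{e}\binom{c}{e}\binom{a+c-e}{c}=1$ all check out. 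The paper instead first reduces Theorem \ref{qlucast1} to the single-step congruence of Theorem \ref{qlucast2} by induction on $a+c$ using the recurrence (\ref{recurd}), and then proves that congruence combinatorially: it interprets $D_q(h,k)$ as $\sum_{\l\in\cP_{h+n,k+n}}q^{\sigma(\l)}$, partitions the paths into four classes $\cQ_1,\ldots,\cQ_4$, and defines cyclic $\Z_n$-actions whose non-trivial orbits have weight sums divisible by $\Phi_n(q)$ (the Rota--Sagan method). Your argument is shorter and purely algebraic, taking (\ref{qlucas}) as a black box; the paper's argument is longer but self-contained at the combinatorial level, yields the finer step-by-step congruence (\ref{qlucasdo2})--(\ref{qlucasde2}), and establishes the lattice-path statistic $\sigma$ for $D_q$ as a by-product. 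Both are valid proofs of the stated theorem.
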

Using the recurrence relation (\ref{recurd}), we can easily reduce Theorem \ref{qlucast1} to
\begin{Thm}\label{qlucast2} Suppose that $n$ is odd. Then
\begin{equation}\label{qlucasdo2}
D_q(h+n,k+n)\equiv D_q(h+n,k)+D_q(h,k+n)+D_q(h,k)
\pmod{\Phi_n(q)}.
\end{equation}
And if $n$ is even, then
\begin{equation}\label{qlucasde2}
D_q(h+n,k+n)\equiv D_q(h+n,k)+D_q(h,k+n)-D_q(h,k)
\pmod{\Phi_n(q)}.
\end{equation}
\end{Thm}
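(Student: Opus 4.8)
The plan is to unify the two congruences of Theorem~\ref{qlucast2} by proving
\[
D_q(h+n,k+n)\equiv D_q(h+n,k)+D_q(h,k+n)+(-1)^{n-1}D_q(h,k)\pmod{\Phi_n(q)}
\]
for all $h,k\in\N$ and all $n\ge 1$, by induction on $h+k$; here $(-1)^{n-1}=1$ when $n$ is odd and $-1$ when $n$ is even, which is precisely the dichotomy in \eqref{qlucasdo2}--\eqref{qlucasde2}. The two facts about the cyclotomic polynomial that drive the induction are $q^n\equiv 1\pmod{\Phi_n(q)}$ (since $\Phi_n(q)\mid q^n-1$) and, when $n$ is even, $q^{n/2}\equiv -1\pmod{\Phi_n(q)}$ (since then $\Phi_n(q)\mid q^{n/2}+1$ while $\Phi_n(q)\nmid q^{n/2}-1$); the second fact is the source of the sign in the even case. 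The main structural ingredient is the $q$-analogue of \eqref{recurd} for $D_q$, namely
\[
D_q(n,k)=D_q(n,k-1)+q^{k}\bigl(D_q(n-1,k)+D_q(n-1,k-1)\bigr),
\]
valid for $n+k\ge 1$; it is obtained from the definition of $D_q$ by applying the $q$-Pascal rule first to $\qbinom{n+k-j}{k}q$ and then to $\qbinom{k}{j}q$, and reindexing the resulting ``diagonal'' sum (the shift $j\mapsto j-1$ turns the exponent $\binom{j+1}2+k-j$ into $\binom{j+1}2+k$, producing the clean factor $q^k$). Rewriting $D_q(n,k)=\sum_{j}q^{\binom{j+1}2}\qbinom{n+k-j}{j,\,n-j,\,k-j}q$ as a $q$-multinomial coefficient also exhibits the symmetry $D_q(n,k)=D_q(k,n)$ and the boundary values $D_q(n,0)=D_q(0,k)=1$, which I will use freely.

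Two auxiliary facts come from the $q$-Lucas congruence \eqref{qlucas}. First, the base value $D_q(n,n)\equiv 2+(-1)^{n-1}\pmod{\Phi_n(q)}$: applying \eqref{qlucas} to $\qbinom{n}{j}q$ for $0\le j\le n$ collapses the defining sum to the terms $j=0$ and $j=n$, whence $D_q(n,n)\equiv\qbinom{2n}{n}q+q^{\binom{n+1}2}$, where $\qbinom{2n}{n}q\equiv\binom{2}{1}=2$ by \eqref{qlucas}, and $q^{\binom{n+1}2}\equiv 1$ or $-1$ according as $n$ is odd or even, because $\binom{n+1}2=\tfrac{n(n+1)}2$ is $\equiv 0$ or $\equiv\tfrac n2\pmod n$ accordingly. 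Second, a ``$\Phi_n$-periodicity'' sublemma: $D_q(n-1,m+n)\equiv D_q(n-1,m)\pmod{\Phi_n(q)}$ for all $m\ge 0$. Writing $m=Mn+\mu$ with $0\le\mu\le n-1$ and using the symmetric form $D_q(n-1,m+n)=\sum_{j=0}^{n-1}q^{\binom{j+1}2}\qbinom{n-1}{j}q\qbinom{m+2n-1-j}{n-1}q$, one applies \eqref{qlucas} to the last $q$-binomial coefficient; its residue modulo $\Phi_n(q)$ vanishes unless $j=\mu$, in which case it is $\qbinom{n-1}{n-1}q=1$, so the whole sum reduces to $q^{\binom{\mu+1}2}\qbinom{n-1}{\mu}q$, and the identical computation for $D_q(n-1,m)$ gives the same residue.

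Now the induction on $h+k$. The base case $h=k=0$ is exactly the assertion $D_q(n,n)\equiv D_q(n,0)+D_q(0,n)+(-1)^{n-1}D_q(0,0)=2+(-1)^{n-1}$, already settled. For the boundary cases, the symmetry of $D_q$ reduces matters to $k=0$; there one applies the $q$-recurrence to $D_q(h+n,n)$, which splits off one term $D_q((h-1)+n,\,0+n)$ to which the inductive hypothesis applies, together with terms of the shape $D_q(n-1,\,\cdot+n)$ to which the sublemma applies, and then reassembles $D_q(h,n)$ with the help of $q^n\equiv 1$, the symmetry, and one further application of the $q$-recurrence. For the interior case $h,k\ge 1$, one applies the $q$-recurrence once to $D_q(h+n,k+n)$, producing $D_q(h+n,(k-1)+n)$, $D_q((h-1)+n,k+n)$ and $D_q((h-1)+n,(k-1)+n)$, and then the inductive hypothesis to each of these three (with parameters $(h,k-1)$, $(h-1,k)$ and $(h-1,k-1)$); using $q^{k+n}\equiv q^k\pmod{\Phi_n(q)}$, the nine resulting terms coincide, term by term, with the nine terms obtained by applying the $q$-recurrence to each of $D_q(h+n,k)$, $D_q(h,k+n)$ and $D_q(h,k)$ on the right-hand side. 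Hence the two sides are congruent modulo $\Phi_n(q)$, and the induction closes.

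The termwise matching in the interior step is routine; conceptually it just says that both sides of the target congruence obey the same recurrence modulo $\Phi_n(q)$ and have the same boundary data. The part I expect to need the most care is the digit bookkeeping behind the two auxiliary facts --- deciding, when \eqref{qlucas} is invoked on $\qbinom{m+2n-1-j}{n-1}q$ or on $\qbinom{n}{j}q$, exactly which index survives and why all the others vanish modulo $\Phi_n(q)$ --- together with the fact that the interior step alone does not close the induction, so the boundary cases $h=0$ or $k=0$ genuinely require the separate (though entirely parallel) argument sketched above. One could instead look for a direct group action on $q$-weighted Delannoy lattice paths, in the spirit of group-theoretic proofs of \eqref{qlucas}, but the inductive route above is self-contained once the $q$-recurrence and \eqref{qlucas} are in hand.
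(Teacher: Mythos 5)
Your proof is correct, but it takes a genuinely different route from the paper. The paper's argument is combinatorial: it partitions the path set $\cP_{h+n,k+n}$ into four classes according to how a path crosses the lines through $(h,k)$, puts a cyclic $\Z_n$-action on each class (rotating blocks of the tail of the path), shows via the statistic $\sigma$ that every non-singleton orbit contributes $0$ modulo $\Phi_n(q)$, and identifies the fixed points with the three terms on the right-hand side --- the sign $(-1)^{n-1}$ arising from the all-diagonal paths, which carry the extra weight $q^{n(n+1)/2}$. Your argument instead runs a double induction through the $q$-recurrence $D_q(h,k)=D_q(h,k-1)+q^kD_q(h-1,k)+q^kD_q(h-1,k-1)$: I checked that the interior step does close (the nine terms match up once $q^{k+n}\equiv q^k$ is used), that the symmetry $D_q(h,k)=D_q(k,h)$ holds via the $q$-multinomial form, that the base value $D_q(n,n)\equiv 2+(-1)^{n-1}$ and the periodicity $D_q(n-1,m+n)\equiv D_q(n-1,m)$ follow from \eqref{qlucas} exactly as you describe (in each sum only the digit $j=\mu$ survives), and that the boundary case $k=0$ reassembles correctly. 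The trade-off: your route is shorter and needs no new combinatorial machinery, but it uses the $q$-Lucas congruence \eqref{qlucas} as a black box and requires the separate boundary analysis, whereas the paper's orbit-counting proof is self-contained (it never invokes \eqref{qlucas}), treats all $h,k$ uniformly, and gives a bijective explanation of where each of the three terms and the sign come from, in the Rota--Sagan tradition the paper is explicitly following.
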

In fact, clearly (\ref{qlucasdo1}) holds when $a=c=0$. Thus using (\ref{qlucasdo2}) and an induction on $a+c$, we can get that
for odd $n$,
\begin{align*}
&D_q((a+1)n+b,(c+1)n+d)\\\equiv& 
D_q((a+1)n+b,cn+d)+D_q(an+b,(c+1)n+d)+D_q(an+b,cn+d)\\
\equiv&\big(D(a+1,c)+D(a,c+1)+D(a,c)\big)D_q(b,d)
=D(a+1,c+1)D_q(b,d)\pmod{\Phi_n(q)}.
\end{align*}
And (\ref{qlucasde1}) can be similarly derived from (\ref{qlucasde2}).

In the next section, we shall discuss some basic combinatorial properties of $D_q(h,k)$. And 
Theorem \ref{qlucast2} will be proved in the third section. Our proof of Theorem \ref{qlucast2} is combinatorial and
bases on the method of group actions, which was developed by Rota and Sagan \cite{RotaSagan80,Sagan85,Sagan92}. 

\section{The combinatorics of $q$-Delannoy numbers}
\setcounter{Lem}{0}\setcounter{Thm}{0}\setcounter{Cor}{0}
\setcounter{equation}{0}

In view of the $q$-binomial theorem \cite[Corollary 10.2.2(c)]{AAR99} and the $q$-Chu-Vandermonde identity \cite[Exercise 10.4(b)]{AAR99}, we have
\begin{align*}
&\sum_{j=0}^hq^{(h-j)(k-j)}(-q;q)_j\qbinom{k}{j}q\qbinom{h}{j}q=
\sum_{j=0}^h\qbinom{k}{j}q\qbinom{h}{j}q\sum_{i=0}^jq^{\binom{i+1}2}\qbinom{j}{i}q\\
=&
\sum_{i=0}^hq^{\binom{i+1}2}\qbinom{k}{i}q\sum_{j=i}^nq^{(h-j)(k-j)}\qbinom{h}{h-j}q\qbinom{k-i}{j-i}q=
\sum_{i=0}^hq^{\binom{i+1}2}\qbinom{k}{i}q\qbinom{n+k-i}{h-i}q.
\end{align*}
Thus we get the following $q$-analogue of (\ref{twod}):
\begin{equation}
D_q(h,k)=\sum_{j=0}^nq^{(h-j)(k-j)}(-q;q)_j\qbinom{k}{j}q\qbinom{h}{j}q.
\end{equation}
Furthermore, $D_q(h,k)$ also satisfies the recurrence relation
\begin{equation}
D_q(h+1,k+1)=D_q(h+1,k)+q^{k+1}D_q(h,k+1)+q^{k+1}D_q(h,k).
\end{equation}
In fact, since
$$
\qbinom{h}{k}q=q^k\qbinom{h-1}{k}q+\qbinom{h-1}{k-1}q=\qbinom{h-1}{k}q+q^{h-k}\qbinom{h-1}{k-1}q,
$$
we have
\begin{align*}
&D_q(h,k)=\sum_{j=0}^hq^{\binom{j+1}2}\qbinom{k}{j}q\qbinom{h+k-j}{k}q\\
=&
\sum_{j=0}^hq^{\binom{j+1}2}\qbinom{k}{j}q\bigg(q^k\qbinom{h-1+k-j}{k}q+\qbinom{h+k-j-1}{k-1}q\bigg)\\
=&q^kD_q(h-1,k)+\sum_{j=0}^hq^{\binom{j+1}2}\bigg(\qbinom{k-1}{j}q+q^{k-j}\qbinom{k-1}{j-1}q\bigg)\qbinom{h+k-j-1}{k-1}q\\
=&D_q(h,k-1)+q^kD_q(h-1,k)+q^kD_q(h-1,k-1).
\end{align*}

Below we shall give a combinatorial interpretation for $D_q(h,k)$.
For a step $\vec{e}\in\{(1,0),(0,1),(1,1)\}$, let $\x(\vec{e})$ and $\y(\vec{e})$ denote the $x$-coordinate and $y$-coordinate of $\vec{e}$ respectively. 
We may write a lattice path $\l$ as $\l=\langle\vec{e}_1,\vec{e}_2,\ldots,\vec{e}_m\rangle$, provided that
$\l$ uses the steps $\vec{e}_1,\vec{e}_2,\ldots,\vec{e}_m$ successively. 

For a path $\l=\langle\vec{e}_1,\ldots,\vec{e}_m\rangle$ define
$$
{\bf x}(\l)=\sum_{j=1}^m{\bf x}(\vec{e_j}),\qquad {\bf y}(\l)=\sum_{j=1}^m{\bf y}(\vec{e_j}).
$$
That is, ${\bf x}(\l)$ (resp. ${\bf y}_j(\l)$) is the difference between the $x$-coordinates (resp. $y$-coordinates)
of the endpoint and the start point of $\l$. Furthermore, define
$$
\sigma(\l)=\sum_{\substack{1\leq j\leq m\\ {\bf y}(\vec{e_j})=1}}{\bf x}(\l_j),
$$
where $\l_j=\langle\vec{e}_1,\ldots,\vec{e}_j\rangle$.
Thus if $\l$ is a path from $(0,0)$ to $(h,k)$, then $\sigma(\l)$ is the sum
of the $x$-coordinates of the endpoints of those steps of $\l$ whose $y$-coordinate is $1$.

Let $\cP_{h,k}$ be the set of all lattice paths from $(0,0)$ to $(h,k)$ using the steps $(1,0)$, $(0,1)$ and $(1,1)$.
The following result gives a combinatorial interpretation of $D_q(h,k)$.
\begin{Thm}
\begin{equation}\label{combind}
D_q(h,k)=\sum_{\l\in \cP_{h,k}}q^{\sigma(\l)}.
\end{equation}
\end{Thm}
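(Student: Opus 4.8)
The plan is to show that the right-hand side of \eqref{combind}, which I write as $R_q(h,k):=\sum_{\l\in\cP_{h,k}}q^{\sigma(\l)}$ (and $R_q(h,k):=0$ when $h$ or $k$ is negative, matching $\cP_{h,k}=\emptyset$), satisfies the same boundary values and the same recurrence as $D_q(h,k)$; since these determine the numbers uniquely, $R_q=D_q$. The boundary is immediate: $\cP_{0,0}$ is the empty path, and $\cP_{h,0}$ (resp.\ $\cP_{0,k}$) consists of the single path built from $h$ east steps (resp.\ from $k$ north steps, all of whose endpoints have $x$-coordinate $0$), so in each case $\sigma=0$, giving $R_q=1$, which agrees with $D_q(h,0)=D_q(0,k)=1$ read off from the defining sum.

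Next I would decompose a path $\l=\langle\vec{e}_1,\dots,\vec{e}_m\rangle\in\cP_{h,k}$ with $h,k\geq 1$ according to its first step: delete $\vec{e}_1$, translate the rest back to the origin to obtain a path $\l'$ in a smaller box, and record how $\sigma$ changes. If $\vec{e}_1=(0,1)$, then $\l'\in\cP_{h,k-1}$; the deleted step has endpoint $(0,1)$, contributing $0$ to $\sigma$, and the translation by $(0,-1)$ leaves all $x$-coordinates fixed, so $\sigma(\l)=\sigma(\l')$. If $\vec{e}_1=(1,0)$, then $\l'\in\cP_{h-1,k}$; the deleted step has $y$-coordinate $0$ and is not counted, while the translation by $(-1,0)$ lowers by $1$ the endpoint $x$-coordinate of each of the $k$ steps with $y$-coordinate $1$, so $\sigma(\l)=\sigma(\l')+k$. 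If $\vec{e}_1=(1,1)$, then $\l'\in\cP_{h-1,k-1}$; the deleted step contributes $1$, and the translation by $(-1,-1)$ costs $1$ for each of the remaining $k-1$ steps with $y$-coordinate $1$, again giving $\sigma(\l)=\sigma(\l')+k$. Adding up the three cases,
\[
R_q(h,k)=R_q(h,k-1)+q^{k}R_q(h-1,k)+q^{k}R_q(h-1,k-1),
\]
which is exactly the recurrence for $D_q$ established above (after the obvious index shift). Since $(h,k-1),(h-1,k),(h-1,k-1)$ all have smaller value of $h+k$, an induction on $h+k$ completes the proof.

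I expect the one delicate point to be the bookkeeping of $x$-coordinates under translation: one has to recognize that the steps contributing to $\sigma$ are precisely those of $y$-coordinate $1$, that is, both $(0,1)$ and $(1,1)$, and then count how many of them survive the deletion of $\vec{e}_1$ in each case. With that settled, no real obstacle remains. If one prefers to avoid the induction, the identity can also be obtained directly: encode $\l$ by the word $\tau\in\{(0,1),(1,1)\}^{k}$ of its steps of $y$-coordinate $1$, say with $j$ occurrences of $(1,1)$, together with the numbers $e_1,\dots,e_{k+1}\geq 0$ of east steps filling the $k+1$ gaps around them (so that $e_1+\cdots+e_{k+1}=h-j$). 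One checks that $\sigma(\l)$ splits as a statistic depending only on $\tau$ plus a statistic depending only on $(e_1,\dots,e_{k+1})$, whose generating functions are $q^{\binom{j+1}2}\qbinom{k}{j}q$ and $\qbinom{h+k-j}{k}q$ respectively, and summing over $j$ recovers the defining expression for $D_q(h,k)$.
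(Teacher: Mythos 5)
Your proof is correct and follows essentially the same route as the paper: induction on $h+k$, showing that $\sum_{\l\in\cP_{h,k}}q^{\sigma(\l)}$ satisfies the recurrence $D_q(h,k)=D_q(h,k-1)+q^kD_q(h-1,k)+q^kD_q(h-1,k-1)$ already established for $D_q$, together with the trivial boundary values. The only cosmetic difference is that you delete the first step and track how $\sigma$ changes under translation, while the paper deletes the last step; both give the same recurrence, and your bookkeeping is sound.
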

\begin{proof} We shall use induction on $h+k$.
There is nothing to do when $h=k=0$. Assume that $h+k\geq 1$ and the assertion holds when $h+k$ is smaller. It suffices to verify the right side of (\ref{combind}) also satisfies the same recurrence relation as $D_q(h,k)$.

Assume that $\l\in\cP_{h,k}$ and $\l=\langle\vec{e}_1,\ldots,\vec{e}_m\rangle$. 
Let $\l_*=\langle\vec{e}_1,\ldots,\vec{e}_{m-1}\rangle$. Since the endpoint of $\vec{e}_{m}$ is $(h,k)$,
$$
\sigma(\l)=\begin{cases}\sigma(\l_*),&\text{if }\vec{e}_{m}=(1,0),\\
\sigma(\l_*)+k,&\text{if }\vec{e}_{m}=(0,1)\text{ or }(1,1).
\end{cases}
$$
So
\begin{align*}
\sum_{\l\in \cP_{h,k}}q^{\sigma(\l)}=
&\sum_{\substack{\l\in \cP_{h,k}\\ \vec{e}_\l=(1,0)}}q^{\sigma(\l)}+
\sum_{\substack{\l\in \cP_{h,k}\\ \vec{e}_\l=(0,1)}}q^{\sigma(\l)}+
\sum_{\substack{\l\in \cP_{h,k}\\ \vec{e}_\l=(1,1)}}q^{\sigma(\l)}\\
=&\sum_{\substack{\l_*\in \cP_{h,k-1}}}q^{\sigma(\l_*)}+
q^k\sum_{\substack{\l_*\in \cP_{h-1,k}}}q^{\sigma(\l_*)}+
q^k\sum_{\substack{\l_*\in \cP_{h-1,k-1}}}q^{\sigma(\l_*)}\\
=&D_q(h,k-1)+q^kD_q(h-1,k)+q^{k}D_q(h-1,k-1).
\end{align*}
\end{proof}
For two paths $\l_1$ and $\l_2$, let $\l_1+\l_2$ denote the path obtained by moving the start p.
That is, if $\l_1=\langle\vec{e}_1,\ldots,\vec{e}_m\rangle$ and $\l_2=\langle\vec{f}_1,\ldots,\vec{f}_n\rangle$, then
$\l_1+\l_2=\langle\vec{e}_1,\ldots,\vec{e}_m,\vec{f}_1,\ldots,\vec{f}_n\rangle$.
Clearly 
$$
{\bf x}(\l_1+\l_2)={\bf x}(\l_1)+{\bf x}(\l_2),\qquad
{\bf y}(\l_1+\l_2)={\bf y}(\l_1)+{\bf y}(\l_2).
$$
And for $\sigma(\l_1+\l_2)$, we also have
\begin{Lem}\label{sigmaadd}
$$
\sigma(\l_1+\l_2)=\sigma(\l_1)+\sigma(\l_2)+{\bf x}(\l_1){\bf y}(\l_2).
$$
\end{Lem}
\begin{proof}
Assume that $\l_1=\langle\vec{e}_1,\ldots,\vec{e}_m\rangle$ and $\l_2=\langle\vec{f}_1,\ldots,\vec{f}_n\rangle$. Then
$$
\sigma(\l_1)=\sum_{\substack{1\leq j\leq m\\ {\bf y}(\vec{e_j})=1}}\sum_{i=1}^j{\bf x}(\vec{e}_j),\qquad
\sigma(\l_2)=\sum_{\substack{1\leq j\leq n\\ {\bf y}(\vec{f_j})=1}}\sum_{i=1}^j{\bf x}(\vec{f}_j).
$$
So
\begin{align*}
&\sigma(\l_1+\l_2)=\sigma(\langle\vec{e}_1,\ldots,\vec{e}_m,\vec{f}_1,\ldots,\vec{f}_n\rangle)\\
=&\sum_{\substack{1\leq j\leq m\\ {\bf y}(\vec{e_j})=1}}\sum_{i=1}^j{\bf x}(\vec{e}_j)+
\sum_{\substack{1\leq j\leq n\\ {\bf y}(\vec{f_j})=1}}\bigg(\sum_{i=1}^n{\bf x}(\vec{e}_j)+\sum_{i=1}^j{\bf x}(\vec{f}_j)\bigg)\\
=&\sigma(\l_1)+\sigma(\l_2)+\sum_{\substack{1\leq j\leq n\\ {\bf y}(\vec{f_j})=1}}\sum_{i=1}^n{\bf x}(\vec{e}_j)
=\sigma(\l_1)+\sigma(\l_2)+{\bf y}(\l_2){\bf x}(\l_1).
\end{align*}
\end{proof}
Lemma \ref{sigmaadd} will be used in our proof of Theorem \ref{qlucast2}.

\section{Lucas' type congruence for $q$-Delannoy numbers}
\setcounter{Lem}{0}\setcounter{Thm}{0}\setcounter{Cor}{0}
\setcounter{equation}{0}

First, we shall partition $\cP_{h+n,k+n}$ into several subsets. 
Let $L_1$ be the vertical line from $(h,k)$ to $(h+n,k)$ and $L_2$ be the horizontal line from $(h,k)$ to $(h,k+n)$. 
For $\l\in P_{h+n,k+n}$, let $\bar{\l}$ denote the part of $\l$ on the $L_1\cup L_2$. Of course, maybe $\bar{\l}$ just contains one point, i.e., the start point of $\bar{\l}$ coincides with the end point.
Let $\check{\l}$ be the part of $\l$ from the origin to the start point of $\bar{\l}$, and let $\hat{\l}$ be 
the part of $\l$ the end point of $\bar{\l}$ to $(h+n,h+k)$. The following graph shows a concrete examples for $\check{\l}$,
$\bar{\l}$ and $\hat{\l}$.
\begin{center}
\includegraphics[width=0.3\textwidth]{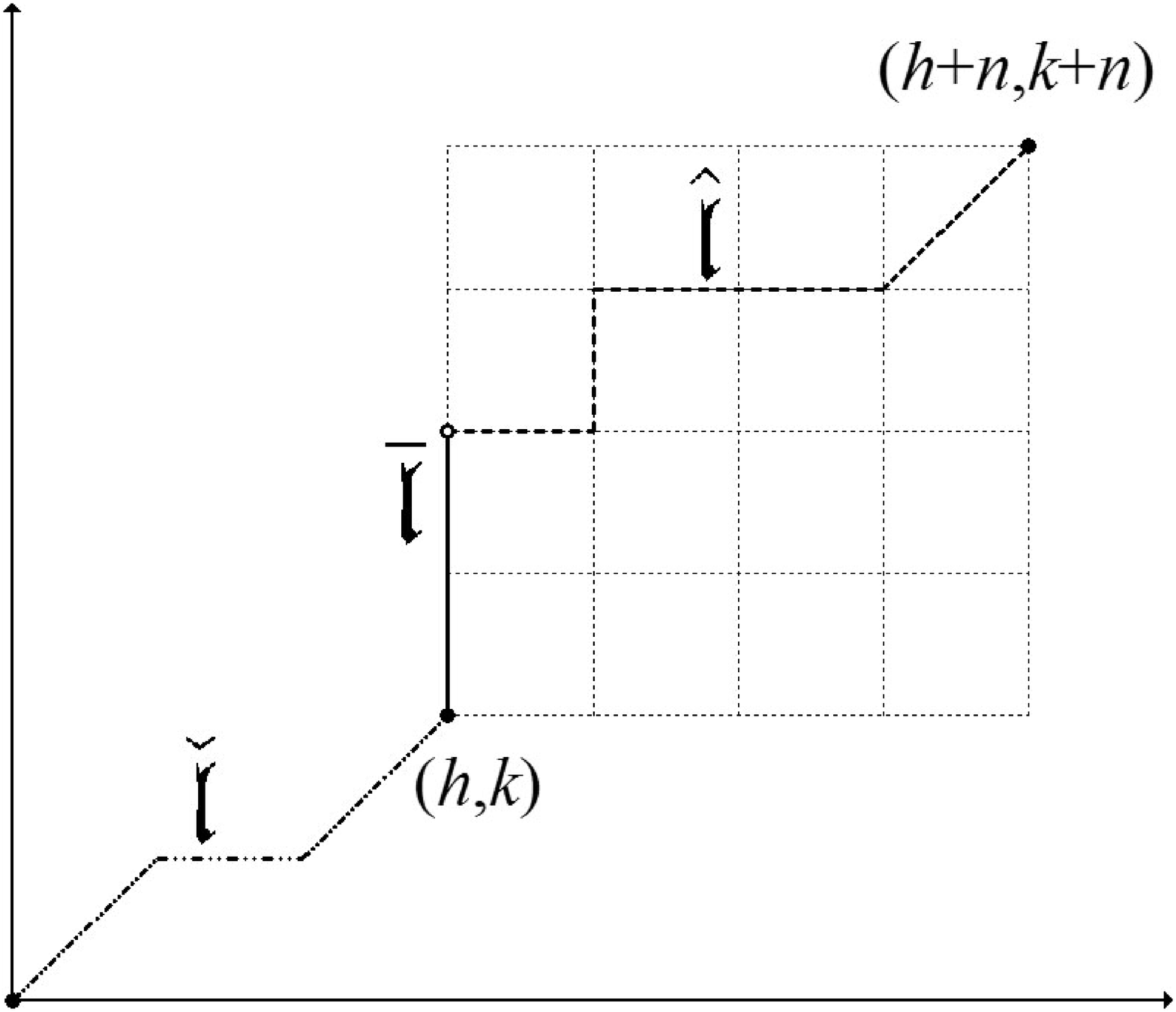}
\end{center}
Let
$$
\cQ_1=\{\l\in\cP_{h+n,k+n}:\,\text{ the end point of }\bar{\l}\text{ is }(h+i,k)\text{ for some }1\leq i\leq n\}
$$
and
$$
\cQ_2=\{\l\in\cP_{h+n,k+n}:\,\text{ the start point of }\bar{\l}\text{ is }(h,k+i)\text{ for some }1\leq i\leq n\}.
$$
Clearly if $\l\in\cP_{h+n,k+n}$ but $\l\not\in \cQ_1\cup \cQ_2$, then $\l$ must touch $(h,k)$, and the step of $\l$,
whose start point is $(h,k)$, must be $(1,0)$ or  $(1,1)$. Let
$$
\cQ_3=\{\l\in P_{h+n,k+n}:\,\l\not\in \cQ_1\cup \cQ_2\text{ and no step of }\hat{\l}\text{ is }(1,1)\}
$$
and
$$
\cQ_4=\{\l\in P_{h+n,k+n}:\,\l\not\in \cQ_1\cup \cQ_2\text{ and at least one step of }\hat{\l}\text{ is }(1,1)\}.
$$
Then $\cP_{h+n,k+n}=\cQ_1\cup\cQ_2\cup\cQ_3\cup\cQ_4$.  The following graph gives the examples of $\cQ_1$, $\cQ_2$, $\cQ_3$ and $\cQ_4$.
\begin{center}
\includegraphics[width=\textwidth]{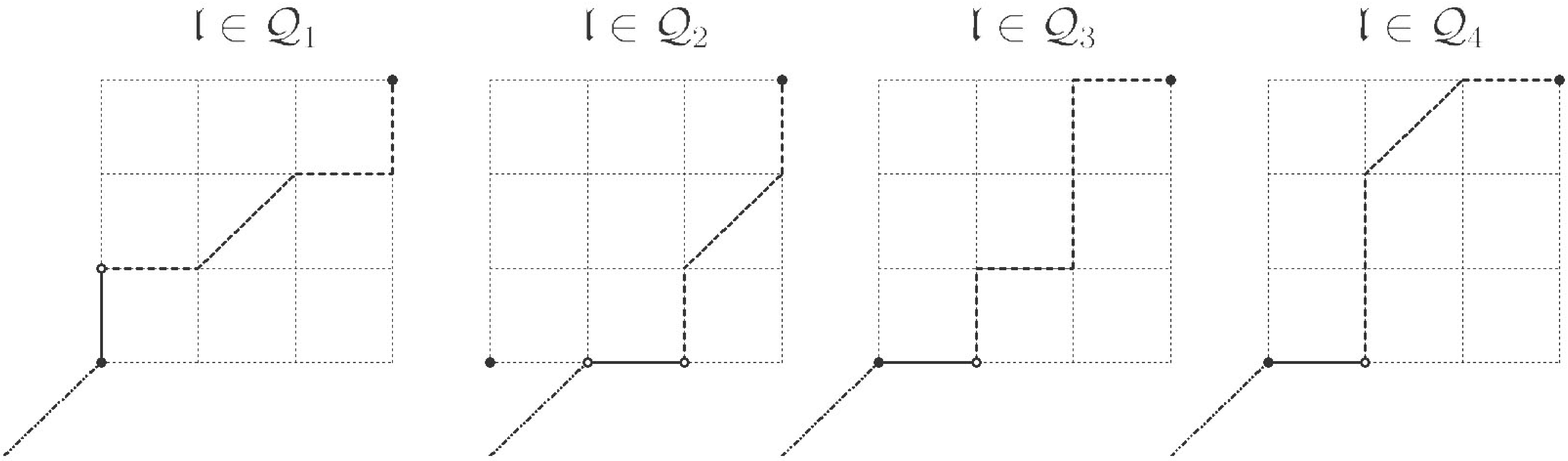}
\end{center}

Let $\Z_n=\Z/n\Z$. We need to define a group action of $\Z_n$ on $\cQ_1$. 
For convenience, we write $\Z_n=\{1,2,\ldots,n\}$. We may assume that $$
\hat{\l}=\k_1+\k_2+\cdots+\k_n,$$
where for each $j$, the first step of $\k_j$ is $(1,0)$ or $(1,1)$ and the other steps are $(0,1)$. 
An example is given as follows.
\begin{center}
\includegraphics[width=0.3\textwidth]{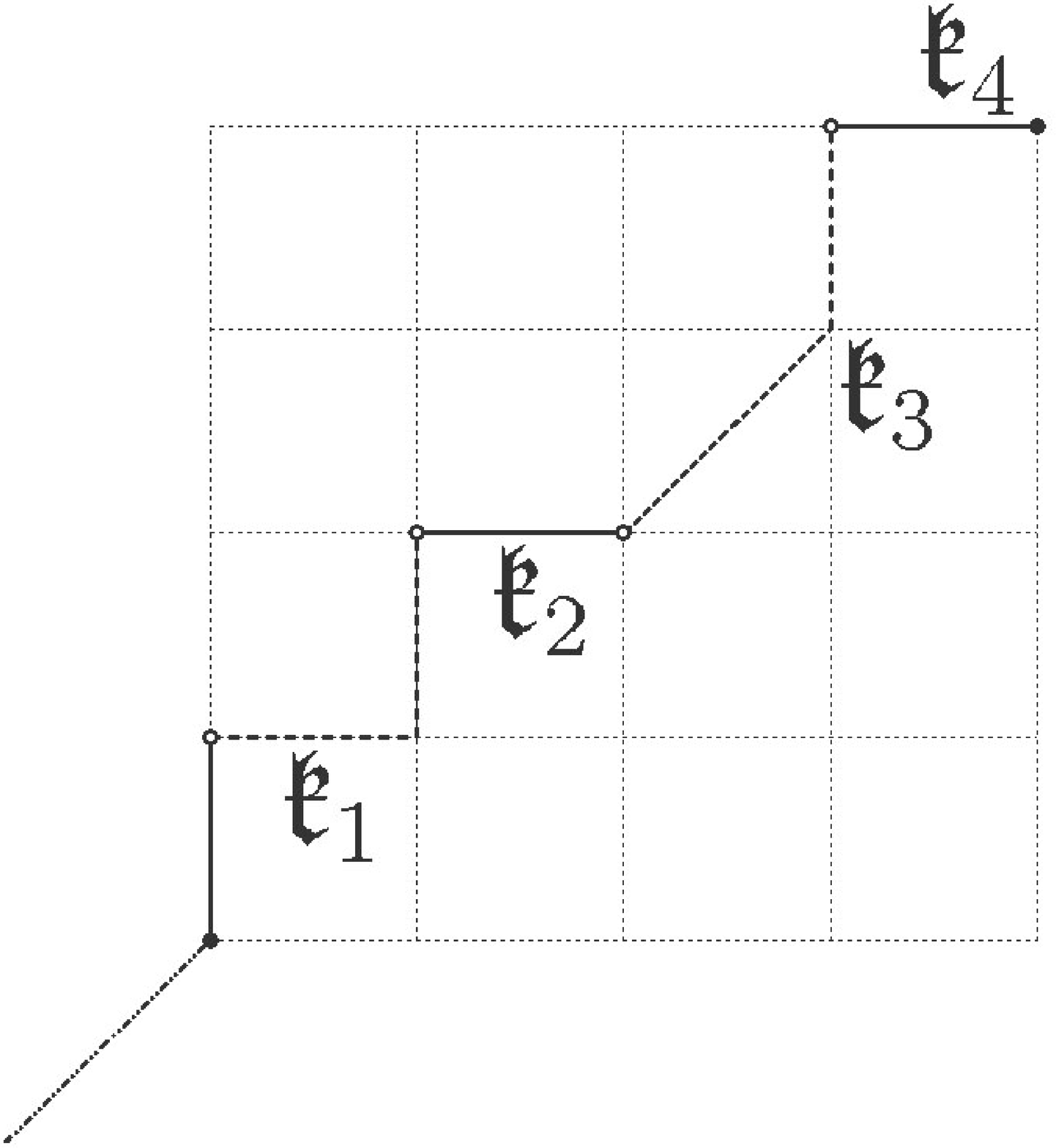}
\end{center}
Define
$$
\phi_1(\l)=\check{\l}+\bar{\l}+(\k_n+\k_1+\cdots+\k_{n-1}).
$$
The following graph shows a concrete transformation of $\phi_1$.
\begin{center}
\includegraphics[width=0.6\textwidth]{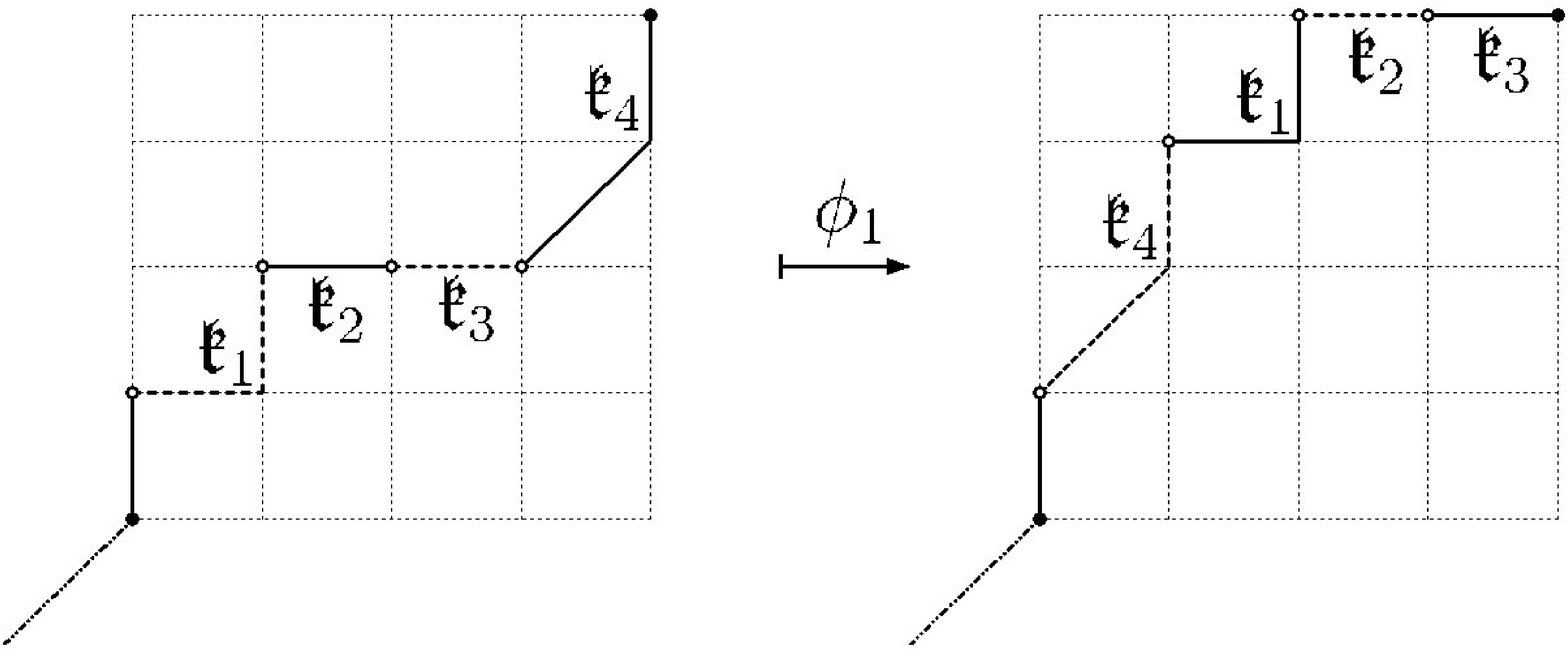}
\end{center}
And for each $j\in\Z_n$, let $\phi_j$ be the $j$-th iteration of $\phi_1$. Clearly $\phi$ is a group action of $\Z_n$ on $\cQ_1$.
For $\l\in \cQ_1$, let
$$
\O_{\phi,\l}=\{\phi_j(\l):\,1\leq j\leq n\}.
$$ 
If
$|\O_{\phi,\l}|=d$, then we know that $d$ is the least divisor of $n$ such that $\phi_d(\l)=\l$. Furthermore, $\hat{\l}$ has the form
$$
\hat{\l}=(\k_1+\cdots+\k_d)+(\k_1+\cdots+\k_d)+\cdots+(\k_1+\cdots+\k_d).$$
Clearly $|\O_{\phi,\l}|=1$ if and only if ${\bf y}(\hat{\l})=0$, i.e., all steps of $\hat{\l}$ is $(1,0)$.

Assume that $|\O_{\phi,\l}|>1$, i.e., $\l$ isn't the fixed point of $\phi_1$. We shall prove that 
\begin{equation}\label{hOphi}
\sum_{\h\in\O_{\phi,\l}}q^{\sigma(\h)}\equiv 0\pmod{\Phi_n(q)}.
\end{equation}
By Lemma \ref{sigmaadd}, we have
\begin{align}\label{sigmal}
&\sigma(\l)=\sigma(\check{\l}+\bar{\l}+\hat{\l})\notag\\
=&\sigma(\check{\l}+\bar{\l})+\frac{n}{d}\cdot\sigma(\k_1+\cdots+\k_d)+{\bf y}(\k_1+\cdots+\k_d)\sum_{j=0}^{\frac{n}d-1}({\bf x}(\check{\l}+\bar{\l})+j{\bf x}(\k_1+\cdots+\k_d))\notag\\
=&\sigma(\check{\l}+\bar{\l})+\frac{n\sigma(\k_1+\cdots+\k_d)}{d}+{\bf y}(\k_1+\cdots+\k_d)\bigg(\frac{n{\bf x}(\check{\l}+\bar{\l})}{d}+\binom{\frac nd}{2}{\bf x}(\k_1+\cdots+\k_d)\bigg).
\end{align}
Notice that
\begin{align*}
\sigma(\k_1+\cdots+\k_d)=&\sigma(\k_1+\cdots+\k_{d-1})+\sigma(\k_d)+{\bf x}(\k_1+\cdots+\k_{d-1}){\bf y}(\k_d)\\
=&\sigma(\k_1+\cdots+\k_{d-1})+\sigma(\k_d)+(d-1){\bf y}(\k_d)
\end{align*}
and
\begin{align*}
\sigma(\k_d+\k_1+\cdots+\k_{d-1})=&\sigma(\k_d)+\sigma(\k_1+\cdots+\k_{d-1})+{\bf x}(\k_d){\bf y}(\k_1+\cdots+\k_{d-1})\\
=&\sigma(\k_d)+\sigma(\k_1+\cdots+\k_{d-1})+{\bf y}(\k_1+\cdots+\k_{d-1}).
\end{align*}
So by (\ref{sigmal}),
\begin{align*}
\sigma(\phi_1(\l))-\sigma(\l)=&\frac{n}{d}({\bf y}(\k_1+\cdots+\k_{d-1})+{\bf y}(\k_d)-d{\bf y}(\k_d))\\
\equiv&\frac{n}{d}\cdot{\bf y}(\k_1+\cdots+\k_{d})\pmod{n}.
\end{align*}
Note that $$
\frac{n}{d}\cdot{\bf y}(\k_1+\cdots+\k_{d})={\bf y}(\hat{\l})<n,
$$
since the start point of $\hat{\l}$ is $(h+i,k)$ for some $i\geq 1$.
Thus we get
\begin{align*}
\sum_{\h\in \O_\l}q^{\sigma(\l)}=q^{\sigma(\l)}+\sum_{j=1}^{d-1}q^{\sigma(\phi_j(\l))}\equiv&
q^{\sigma(\l)}\sum_{j=0}^{d-1}q^{j\cdot\frac{n}{d}\cdot{\bf y}(\k_1+\cdots+\k_{d})}\\
=&
q^{\sigma(\l)}\cdot\frac{1-q^{d\cdot\frac{n}{d}\cdot{\bf y}(\k_1+\cdots+\k_{d})}}{1-q^{\frac{n}{d}\cdot{\bf y}(\k_1+\cdots+\k_{d})}}\equiv0\pmod{\Phi_n(q)}.
\end{align*}
Note that $\cQ_1$ can be partitioned into the union of the orbits of some $\l\in \cQ_1$. It follows from (\ref{hOphi}) that
$$
\sum_{\l\in \cQ_1}q^{\sigma(\l)}\equiv \sum_{\substack{\l\in \cQ_1\\ {\bf y}(\hat{\l})=0}}q^{\sigma(\l)}\pmod{\Phi_n(q)}.
$$

For $\l\in \cQ_2$, we may also write
$$
\hat{\l}=\k_1+\cdots+\k_n,
$$
where the first steps of $\k_1,\ldots,\k_n$ are $(0,1)$ or $(1,1)$, and the other steps of $\k_1,\ldots,\k_n$ are $(1,0)$.
There is an example as follows.
\begin{center}
\includegraphics[width=0.3\textwidth]{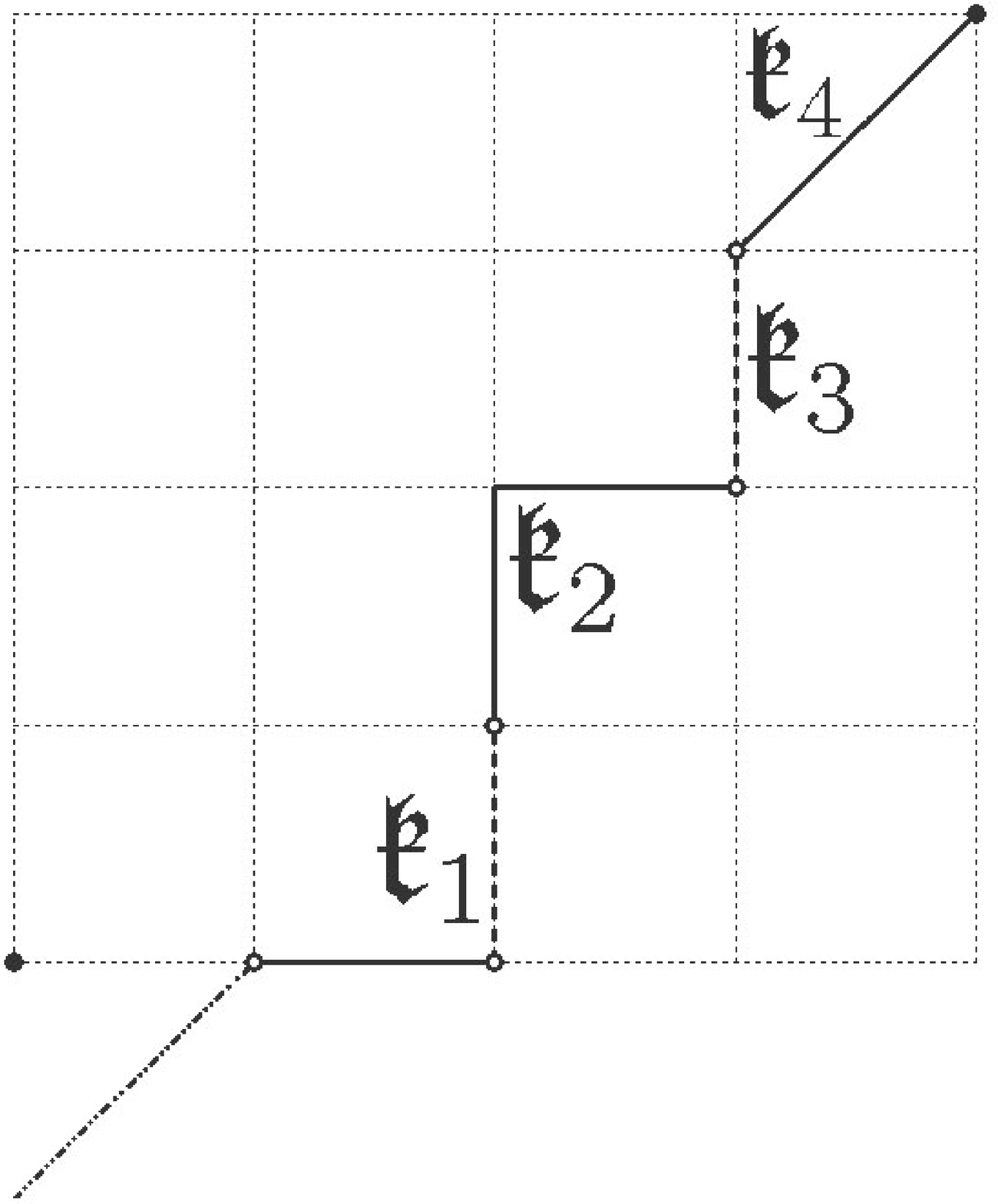}
\end{center}
Define
$$
\psi_1(\l)=\check{\l}+\bar{\l}+(\k_n+\k_1+\cdots+\k_{n-1}).
$$
Then $\psi_1$ can generate a group of $\Z_n$ on $\cQ_2$.
Assume that $|\O(\psi,\l)|=d$ where $\O(\psi,\l)=\{\psi_j(\l):\,j\in\Z_n\}.$, i.e.,
$$
\hat{\l}=(\k_1+\cdots+\k_d)+\cdots+(\k_1+\cdots+\k_d).
$$
Clearly
\begin{align*}
\sigma(\k_1+\cdots+\k_d)=&\sigma(\k_1+\cdots+\k_{d-1})+\sigma(\k_d)+{\bf x}(\k_1+\cdots+\k_{d-1}){\bf y}(\k_d)\\
=&\sigma(\k_1+\cdots+\k_{d-1})+\sigma(\k_d)+{\bf x}(\k_1+\cdots+\k_{d-1}).
\end{align*}
and
\begin{align*}
\sigma(\k_d+\k_1+\cdots+\k_{d-1})=&\sigma(\k_d)+\sigma(\k_1+\cdots+\k_{d-1})+{\bf x}(\k_d){\bf y}(\k_1+\cdots+\k_{d-1})\\
=&\sigma(\k_d)+\sigma(\k_1+\cdots+\k_{d-1})+{\bf x}(\k_d)(d-1).
\end{align*}
By (\ref{sigmal}), we also have
$$
\sigma(\psi_1(\l))-\sigma(\l)=\sigma(\k_d+\k_1+\cdots+\k_{d-1})-\sigma(\k_1+\cdots+\k_{d})\equiv
(d-1){\bf x}(\k_1+\cdots+\k_{d})\pmod{\Phi_n(q)}.
$$
Now
$$
\frac{n}{d}\cdot{\bf x}(\k_1+\cdots+\k_{d})={\bf x}(\hat{\l})<n.
$$
So if $d>1$, i.e., ${\bf x}(\hat{\l})\neq 0$, then
\begin{align*}
\sum_{\h\in\O_{\psi,\l}}q^{\sigma(\l)}\equiv
q^{\sigma(\l)}\sum_{j=0}^{d-1}q^{j\cdot\frac{n}{d}\cdot{\bf y}(\k_1+\cdots+\k_{d})}
=
q^{\sigma(\l)}\cdot\frac{1-q^{d\cdot\frac{n}{d}\cdot{\bf y}(\k_1+\cdots+\k_{d})}}{1-q^{\frac{n}{d}\cdot{\bf x}(\k_1+\cdots+\k_{d})}}\equiv0\pmod{\Phi_n(q)}.
\end{align*}
Thus
$$
\sum_{\l\in \cQ_2}q^{\sigma(\l)}\equiv \sum_{\substack{\l\in \cQ_2\\ {\bf x}(\hat{\l})=0}}q^{\sigma(\l)}\pmod{\Phi_n(q)}.
$$

Similarly, by considering the action of $\psi$ on $\cQ_3$, we also can get
$$
\sum_{\l\in \cQ_3}q^{\sigma(\l)}\equiv \sum_{\substack{\l\in \cQ_3\\ {\bf x}(\hat{\l})=0}}q^{\sigma(\l)}\pmod{\Phi_n(q)}.
$$

Let us turn to $\cQ_4$. Unfortunately, $\psi$ is not a group action on $\cQ_4$, since there exists $\l\in \cQ_4$ such that $\psi_1(\l)\in \cQ_1$. So we must use a different type of group action.
For $\l\in \cQ_4$, write
$$
\bar{\l}+\hat{\l}=(\e_1+\v_1)+(\e_2+\v_2)+\cdots+(\e_n+\v_n),
$$
where for each $j$ all steps of $\v_j$ are $(0,1)$, and $\e_j$ is just formed by a single step which is $(1,0)$ or $(1,1)$.
That is, partition $\bar{\l}+\hat{\l}$ into the sum of some single east or northeast steps and some vertical paths (see the following graph).
\begin{center}
\includegraphics[width=0.3\textwidth]{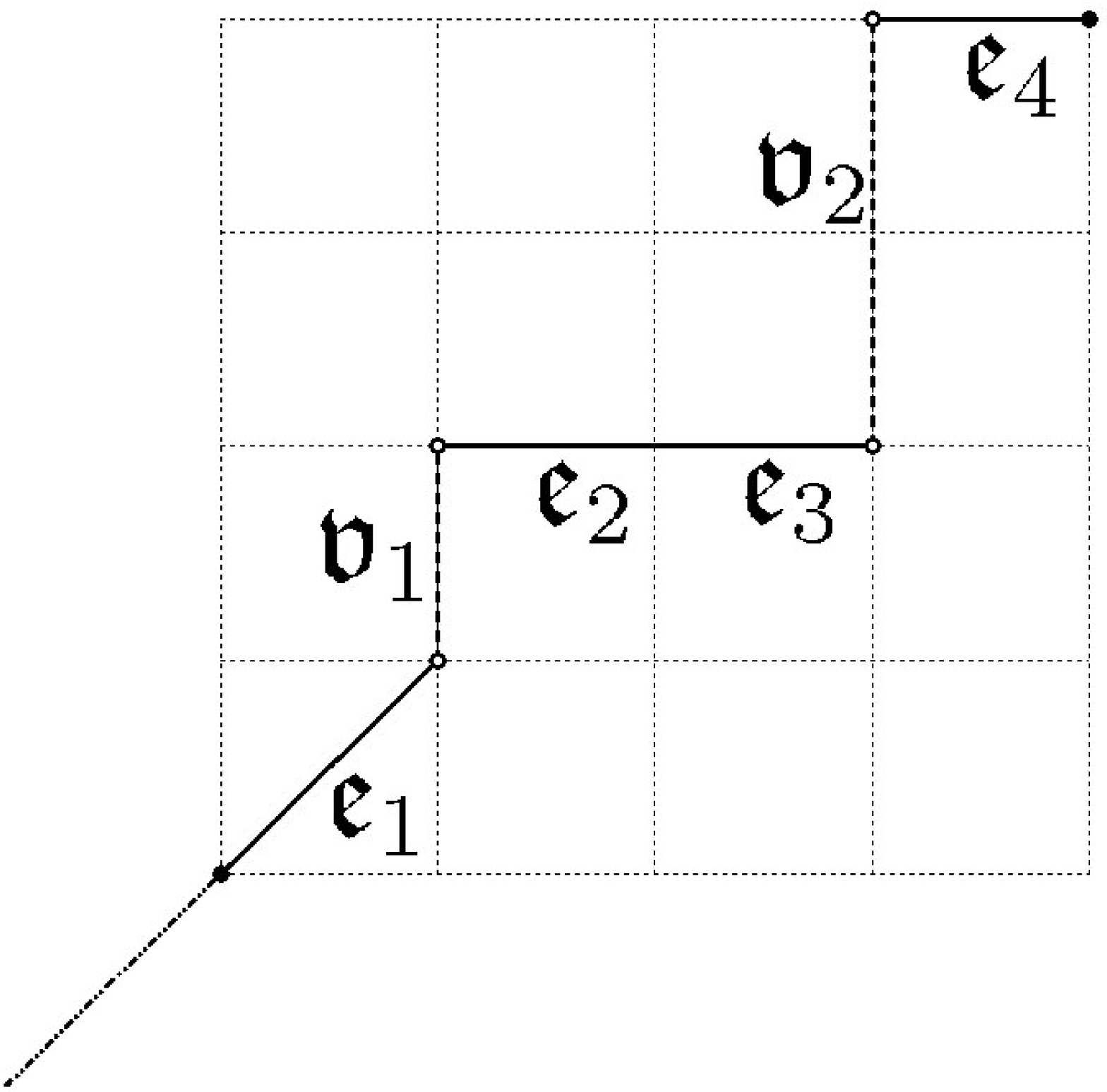}
\end{center}
Define
$$
\tau_1(\l)=\check{\l}+(\e_d+\v_1)+(\e_1+\v_2)+(\e_2+\v_3)+\cdots+(\e_{d-1}+\v_d).
$$
An example for $\tau_1$ is showed as follows.
\begin{center}
\includegraphics[width=0.6\textwidth]{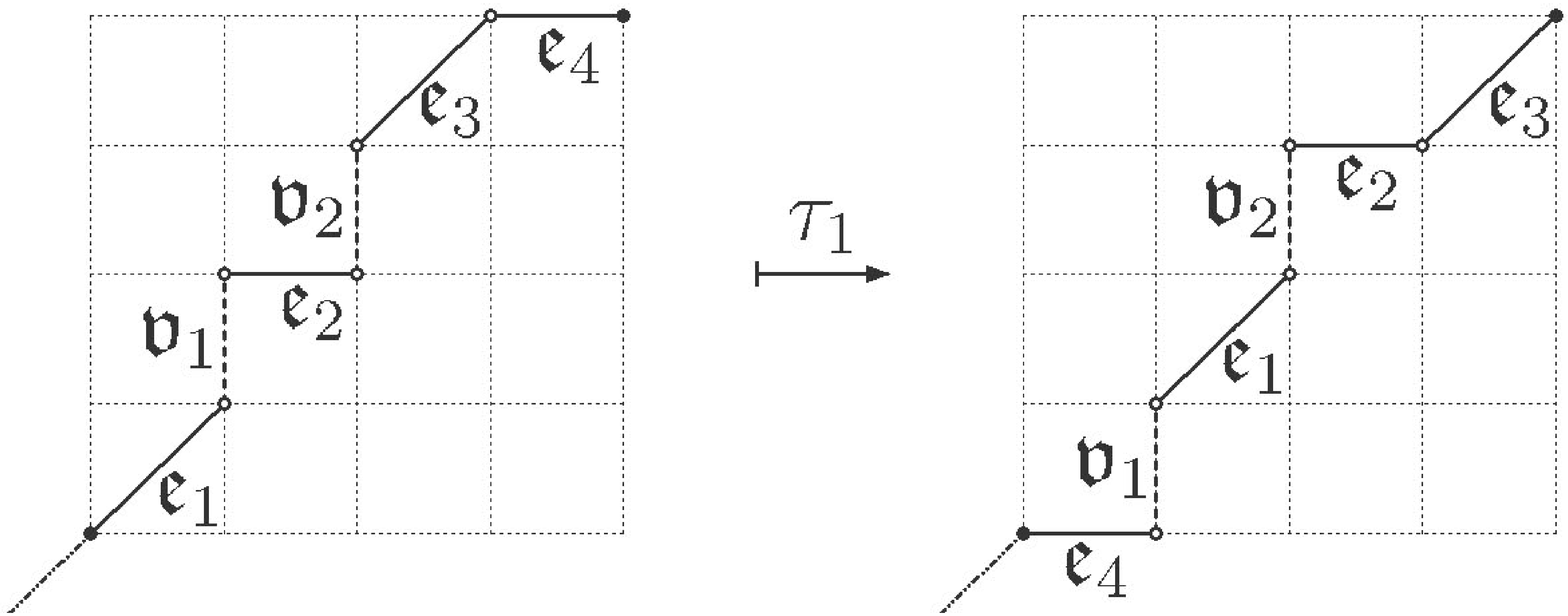}
\end{center}
Then $\tau_1$ also can generate a group action of $\Z_n$ on $\cQ_4$. We need to compute $\sigma(\tau_1(\l))-\sigma(\l)$. When $\l$ is transferred to $\tau_1(\l)$, for those steps of  $\v_1,\ldots,\v_h$, the $x$-coordinate of their endpoints will not vary. For $1\leq j\leq d-1$, if $\e_j=(1,1)$, then the $x$-coordinate of the endpoint of $\e_j$ will be added by $1$. And if $\e_d=(1,1)$, the $x$-coordinate of its endpoint will be subtracted by $n-1$. So letting $$s_\l=|\{j:\e_j=(1,1)\}|,$$ we have 
$$
\sigma(\tau_1(\l))-\sigma(\l)=\begin{cases}s_\l,&\text{if }\e_d=(1,0),\\
 (s_\l-1)-(n-1),&\text{if }\e_d=(1,1).\\
\end{cases}
$$
Thus we always have
$$
\sigma(\tau_1(\l))-\sigma(\l)\equiv s_\l\pmod{n}.
$$
Notice that clearly $|\O_{\tau,\l}|=1$ if and only all steps of $\bar{\l}+\hat{\l}$ are $(1,1)$, i.e., $s_\l=n$. 
Now assume that $|O_{\l}|=d>1$. Then
$$
\bar{\l}+\hat{\l}=(\e_1+\v_1)+\cdots+(\e_d+\v_d)+(\e_1+\v_{d+1})+\cdots+(\e_d+\v_{2d})+\cdots+
(\e_1+\v_{n-d+1})+\cdots+(\e_d+\v_{n}),
$$
So $s_\l$ must be  a multiple of $n/d$.
Since $1\leq s_\l<n$ now, we also have
$$
\sum_{\h\in\O_{\tau,\l}}q^{\sigma(\l)}\equiv
q^{\sigma(\l)}\sum_{j=0}^{d-1}q^{js_\l}
=
q^{\sigma(\l)}\cdot\frac{1-q^{ds_\l}}{1-q^{s_\l}}\equiv0\pmod{\Phi_n(q)}.
$$

Now we get
$$
\sum_{\l\in P_{h+n,k+n}}q^{\sigma(\l)}\equiv
\sum_{\substack{\l\in \cQ_1\\ {\bf y}(\hat{\l})=0}}q^{\sigma(\l)}+
\sum_{\substack{\l\in \cQ_2\\ {\bf x}(\hat{\l})=0}}q^{\sigma(\l)}+\sum_{\substack{\l\in \cQ_3\\ {\bf x}(\hat{\l})=0}}q^{\sigma(\l)}+
\sum_{\substack{\l\in \cQ_4\\ s_{\l}=n}}q^{\sigma(\l)}\pmod{\Phi_n(q)}.
$$
If $\l\in \cQ_1$ and ${\bf y}(\hat{\l})=0$, then
$
\sigma(\l)=\sigma(\check{\l}+\bar{\l})
$
since all steps of $\hat{\l}$ are $(1,0)$. So
$$
\sum_{\substack{\l\in \cQ_1\\ {\bf y}(\hat{\l})=0}}q^{\sigma(\l)}=
\sum_{\l\in P_{h+n,k}}q^{\sigma(\l)}=D_q(h+n,k).
$$
And
if $\l\in \cQ_2\cup \cQ_3$ and ${\bf x}(\hat{\l})=0$,
then
$$
\sigma(\l)=\sigma(\check{\l}+\bar{\l})+n(k+n).
$$
It follows that
$$
\sum_{\substack{\l\in \cQ_2\cup \cQ_3\\ {\bf x}(\hat{\l})=0}}q^{\sigma(\l)}=
q^{n(k+n)}\sum_{\l\in P_{h,k+n}}q^{\sigma(\l)}\equiv
\sum_{\l\in P_{h,k+n}}q^{\sigma(\l)}=D_q(h,k+n)\pmod{\Phi_n(q)}.
$$
Suppose that $\l\in \cQ_4$ and $s_\l=n$. Since $\bar{\l}$ just includes one point $(h,k)$ and all steps of $\hat{\l}$ are $(1,1)$,
we have
$$
\sigma(\l)=\sigma(\check{\l})+\sum_{j=1}^n(k+j)=\sigma(\check{\l})+kn+\frac{n+1}{2}.
$$
So
$$
\sum_{\substack{\l\in \cQ_4\\ s_\l=n}}q^{\sigma(\l)}=
q^{kn+\frac{n+1}{2}}\sum_{\l\in P_{h,k}}q^{\sigma(\l)}\equiv
q^{\frac{n+1}{2}}\sum_{\l\in P_{h,k+n}}q^{\sigma(\l)}=q^{\frac{n+1}{2}}D_q(h,k)\pmod{\Phi_n(q)}.
$$
If $n$ is odd, then
$$
q^{\frac{n+1}{2}}=(q^n)^{\frac{n+1}{2}}\equiv1\pmod{\Phi_n(q)}.
$$
Suppose that $n$ is even. Noting that
$$
1+q^{\frac n2}=\frac{1-q^n}{1-q^{\frac n2}}\equiv 0\pmod{\Phi_n(q)},
$$
we have
$$
q^{\frac{n+1}{2}}=(q^{\frac n2})^{n+1}\equiv(-1)^{n+1}=-1\pmod{\Phi_n(q)}.
$$
All are done.\qed

\end{document}